\renewcommand\eqref[1]{(\ref{#1})} 
 \newtheorem{thm}{Theorem}[section]
 \newtheorem{cor}[thm]{Corollary}
 \newtheorem{lem}[thm]{Lemma}
 \newtheorem{prop}[thm]{Proposition}
 \theoremstyle{definition}
 \newtheorem{defn}[thm]{Definition}
 \theoremstyle{remark}
 \numberwithin{equation}{section}
\newcommand{\half}{\frac{1}{2}}
\newcommand{\ene}{\mathbb{N}}
\newcommand{\bba}{\mathcal{B}}
\newcommand{\er}{\mathbb{R}}
\newcommand{\ce}{\mathbb{C}}
\newcommand{\zet}{\mathbb{Z}}
\newcommand{\Co}{\mathbb{C}}
\newcommand{\zn}{\mathbb{Z}^n}
\newcommand{\tn}{\mathbb{T}^n}
\newcommand{\To}{\mathbb{T}}
\newcommand{\fou}{\mathcal{F}}
\newcommand{\bi}{\begin{itemize}}
\newcommand{\ei}{\end{itemize}}
\newcommand{\be}{\begin{enumerate}}
\newcommand{\ee}{\end{enumerate}}
\newcommand{\beq}{\begin{equation}}
\newcommand{\ar}{\mathbb{R}}
\newcommand{\eq}{\end{equation}}
\newcommand{\jpxi}{\langle k \rangle}
\newcommand{\Dcal}{\mathcal{D}}
\newcommand{\Dp}{\mathcal{D}_p}
\def\Op{{{\rm Op}}}
\def\Hcal{{\mathcal H}}
\DeclareMathOperator{\Tr}{Tr}
\DeclareMathOperator{\Det}{Det}
\def\Rn{{{\mathbb R}^n}}
\def\Tn{{{\mathbb T}^n}}
\def\Zn{{{\mathbb Z}^n}}
\def\SU2{{{\rm SU(2)}}}
\def\lapsu2{{{\mathcal L}_\SU2}}
\def\Op{\text{\rm Op}}
\begin{document} 

%
%
%
%
%
%
%
%
%
\title[A Poincar\'e determinant on the torus]
 {A Poincar\'e determinant on the torus}

\author[Julio Delgado]{Julio Delgado}

\address{%
	Universidad del Valle
	Departamento de Matem\'aticas\\
	Calle 13 100-00\\
	Cali-Colombia}
\email{delgado.julio@correounivalle.edu.co}

\subjclass[2010]{Primary 47B10, 35S05; Secondary 58J40, 22E30.}

\keywords{Fredholm determinants, Poincar\'e determinant, pseudo-differential operators,   Hill's equation.   }

\date{\today}
\begin{abstract}
In this work we introduce a  Poincar\'e determinant type for   operators on the torus $\To^n$. As an application we   establish the existence of nontrivial solutions for elliptic  equations of the form $(-\Delta)^{\frac{\nu}{2}}u+Qu=0$ on  $\To^n$ by using the Hill's method. 
\end{abstract}

\maketitle

\section{Introduction}
The  Poincar\'e determinant arises as an attempt to formulate a rigorous  definition for the  determinant of a matrix of infinite order. Infinite systems of linear equations  
 seems to appear for the first time in Fourier's work on the theory of heat. 
It is well known that determinants of matrices of infinite order were  introduced by  G. W. Hill in his investigation on the motion of  the lunar perigee(c.f \cite{hill1:cl}) in order to study  the equation that bear his name. However, Hill's approach was lack of mathematical rigorousness
 and the relevance of the applications motivated H. Poincar\'e to introduced a suitable class of matrices and their corresponding determinants (c.f \cite{poi:hill}). Poincar\'e's  approach to study the Hill's differential equation is elegant and  therein the space $\ell^1(\zet\times\zet)$ arises as the natural one for the definition of such determinants.  The main problem is to find conditions to ensure the existence of periodic solutions for the Hill's differential equation and the Poincar\'e determinant is useful for this purpose.\\
 

  A general point of view to define the determinant of $I+A$ consist in considering $A$ as an operator in a class endowed with a trace, that is is the point of view of Fredholm determinants. 
 There are several approaches to define traces and determinants in the setting of Banach spaces are known. Herein we  consider the point of view of embedded algebras introduced by I. Gohberg, S. Goldberg and N. Krupnik (cf. \cite{goh:trace}). We are going to work within the framework of embedded algebras and specifically the algebra of matrices in $\ell^1(\zet\times\zet)$ introduced by Poincar\'e for the study of the Hill's differential equation. \\


In this work we introduce a Poincar\'e determinant type on the torus $\To^n$ by using the Fourier transform on this group. This will allow us to establish the existence of nontrivial solutions for elliptic equations of the form $(-\Delta)^{\frac{\nu}{2}}u+Qu=0$ on  $\To^n$ for $\nu>n$, by applying  an extension of the Hill's method, in the special case of pseudodifferental operators on $\Tn$. \\

The study of Fredholm determinants has been an active field of research, in particular
 due to its applications in the analysis of differential equations,
 see e.g. \cite{Zhao-Barnett,Bothner-Its:CMP-2014, Borodin-Corwin-Remenik,Gesztesy-Latushkin-Zumbrun,McKean:CPAM-2003}. The relevance of such applications has also attracted the attention towards
 the numerical analysis of such determinants.  A systematic study of numerical computations for Fredholm determinants  was initiated by Bornemann \cite{Bornemann:MC-2010}.  Formulas for the trace and Schatten-von Neumann properties  in different settings have been studied in   \cite{dr:suffkernel}, \cite{dr14a:fsymbsch}, \cite{dr13:schatten} and \cite{dr13a:nuclp}.\\\\

 In Section 2 we briefly recall some basics on traces and determinants in the setting introduced in \cite{goh:trace}. In Section 3 we define a Poincar\'e determinant for operators on the torus. We establish the existence of nontrivial solutions for the equations $(-\Delta)^{\frac{\nu}{2}}u+Qu=0$ on the torus $\To^n$ by using the Hill's method with the Poincar\'e determinant.
 
 
\section{traces and Determinants}
In this section we recall the definition of traces and determinants in the sense of \cite{goh:trace} and some of their basic properties. The main idea consists in construct an extension from finite rank operators to some suitable algebra.\\

Let  $\mathcal{B}$ be a Banach space, we denote by  $\mathfrak{F}(\bba)$ the space of finite rank operators on  $\bba$. We recall that in this context finite rank operators are assumed bounded. 
  We also denote by $\mathcal{L}(\bba)$ the $C^*$-algebra of bounded linear operators on $\bba$.  
 We now briefly recall the definition of trace and determinants for finite rank operators. The following fundamental properties of trace and determinants for finite rank operators are mainly consequences of the finite square matrix setting. We refer the reader to 
  \cite{goh:trace} for a more comprehensive treatment on those topics. 
  
\begin{lem} Let $\bba$ be a Banach space and $F\in \mathfrak{F}(\bba)$. Then  $\bba$ can be decomposed as a direct sum
	\beq B=M_F\oplus N_F ,\label{dmnfg}\eq
	where $F(M_F)\subset M_F$ and $N_F\subset Ker F.$
\end{lem}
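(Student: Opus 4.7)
The plan is to identify what any valid decomposition must look like, then explicitly construct $M_F$ and $N_F$. First I would observe that the conditions force $R := F(\bba) \subset M_F$: writing $x = m + n$ with $m \in M_F$ and $n \in N_F \subset \operatorname{Ker}(F)$ gives $Fx = Fm \in M_F$. Conversely, any subspace containing $R$ is automatically $F$-invariant (since $F(M_F) \subset R \subset M_F$), so the invariance requirement reduces to $M_F \supset R$. Next, $F$ being bounded makes $K := \operatorname{Ker}(F)$ closed, and the map induced by $F$ identifies $\bba / K$ with $R$, so $K$ has finite codimension $r = \operatorname{rank}(F)$ in $\bba$.

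The construction has two ingredients. For the complement: $R \cap K$ is finite-dimensional (as a subspace of the $r$-dimensional $R$) and hence closed, so it admits a closed complement $N_F$ inside the Banach space $K$, yielding $K = (R \cap K) \oplus N_F$. For $M_F$: set $s = \dim(R \cap K)$ and let $\pi\colon \bba \to \bba/K$ denote the quotient map. The image $\pi(R)$ has dimension $r - s$; extend a basis of $\pi(R)$ by classes $\bar w_1, \ldots, \bar w_s$ to a basis of the $r$-dimensional space $\bba/K$, lift each $\bar w_i$ to some $w_i \in \bba$, and set $M_F := R + \operatorname{span}(w_1, \ldots, w_s)$, which is finite-dimensional (and therefore closed).

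The verifications are mostly routine. Invariance $F(M_F) \subset M_F$ is immediate from $F(M_F) \subset R \subset M_F$. For $M_F + N_F = \bba$: since $\pi(M_F) = \bba/K$ by construction, every $x \in \bba$ decomposes as $x = m + k$ with $m \in M_F$ and $k \in K$, and then $k = r_0 + n$ with $r_0 \in R \cap K \subset M_F$ and $n \in N_F$, giving $x \in M_F + N_F$. The delicate step is $M_F \cap N_F = \{0\}$, which reduces (since $N_F \subset K$) to proving $M_F \cap K = R \cap K$: if $x = r + \sum c_i w_i \in K$, then $\pi(r) + \sum c_i \bar w_i = 0$ in $\bba/K$, and the linear independence of the chosen basis of $\bba/K$ forces $c_i = 0$ and $\pi(r) = 0$, i.e., $r \in R \cap K$. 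The main obstacle, and the reason the naive choice $M_F = R$ fails, is that $R \cap K$ can be nontrivial (already when $F\vert_R$ has nilpotent part in finite dimensions): one must enlarge $M_F$ beyond $R$ by suitable lifts so that $M_F$ still surjects onto $\bba/K$, while simultaneously keeping $M_F \cap K$ pinned down to exactly $R \cap K$, so that the chosen complement inside $K$ does complement $M_F$ in all of $\bba$.
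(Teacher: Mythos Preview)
Your construction is correct: the key insight that any admissible $M_F$ must contain $R=F(\bba)$, together with the explicit enlargement of $R$ by lifts $w_1,\dots,w_s$ so that $\pi(M_F)=\bba/K$ while $M_F\cap K$ stays equal to $R\cap K$, yields a valid decomposition, and all verifications go through as you wrote them. One small remark: your $M_F$ is finite-dimensional (indeed $\dim M_F=r+s$), which is needed for the paper's subsequent use of $\Tr(F_1)$ and $\Det(I_1+F_1)$, so it is worth stating this explicitly.

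As for comparison with the paper: the paper does not actually prove this lemma. It is stated as background recalled from Gohberg--Goldberg--Krupnik \cite{goh:trace}, and the text proceeds directly to use the decomposition without supplying an argument. So there is no ``paper's own proof'' to compare against; your proposal fills in exactly the kind of elementary linear-algebra/Banach-space argument the cited reference contains.
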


The decomposition \eqref{dmnfg} allows us to write the operators $F$ and $I+F$ as $2\times 2$ matrices of the form
\beq F=\begin{bmatrix} 
	F_1 & 0 \\
	0 & 0 
\end{bmatrix},\,\, I+F=\begin{bmatrix} 
I_1+F_1 & 0 \\
0 & I_2 
\end{bmatrix}.
\label{mm35}\eq
Here $F_1=F|_{M_F}$ is the restriction of $F$ to the finite dimensional subspace $M_F$ and $I, I_1, I_2$ denote the identity operators in $\bba, M_F, N_F$ respectively. Since  $M_F$ is a finite dimensional space, the functionals $\Tr{F_1}$ and $\Det(I_1+F_1)$ are well defined. Using this we can define:
\beq \Tr(F):=\Tr(F_1), \,\, \Det (I+F):=\Det(I_1+F_1).\label{ddtrf5}
\eq
These definitions are independent of the choice of the subspace $M_F$. Indeed, this is due to the following important formulas in terms of eigenvalues:
\beq \Tr(F_1)= \sum\limits_{j=1}^m \lambda_j(F_1),\,\, \Det(I+F_1)=\prod_{j=1}^{m}(1+\lambda_j(F_1)) ,  
\eq
where $m=\dim M_F$ and $\lambda_1(F_1),\dots,\lambda_m(F_1)$ are the eigenvalues of $F_1$ counted according to their algebraic multiplicites. Moreover, an application of \eqref{mm35} and the Jordan decomposition of $F_1$ shows that the nonzero eigenvalues of $F$ do not depend on the choice of $M_F$ and  
\beq \Tr(F)= \sum\limits_{j=1}^m \lambda_j(F),\,\, \Det(I+F)=\prod_{j=1}^{m}(1+\lambda_j(F)).  
\eq
Therefore the Definition \eqref{ddtrf5} is independent of the choice of the subspace $M_F$. Moreover, we see that $I+F$ is invertible in $\mathcal{L}(\bba)$ if and only if $\Det(I+F)\neq 0$.\\ 

The functionals $\Tr(\cdot)$ and $\Det(\cdot)$ enjoy some fundamental properties:\\

The trace  $\Tr(\cdot)$ is linear on $\mathfrak{F}(\bba)$ and 
\beq\Tr(AB)=\Tr(BA).\label{comtr}\eq
The determinant  has the following multiplicative property
\beq \Det((I+A)(I+B))=\Det(I+A)\Det(I+B)
\eq
and 
\beq \Det(I+AB)=\Det(I+BA).
\eq


In order to extend the trace and determinants to larger classes of operators, we recall the notion of algebras that we are going to use. We note that the functionals trace and determinant are not continuous with respect to the operator norm. This fact motivates 
 the introduction of the following concept of algebra. 

\begin{defn} A subalgebra $\mathcal{D}$ of $\mathcal{L}(\bba)$ is {\em  continuously embedded } in $\mathcal{L}(\bba)$ if the following two conditions hold:\\
	
	(i) There exists a norm $\|\cdot\|_{\mathcal{D}}$ on $\mathcal{D}$ and a constant $C>0$ such that
	\beq\|A\|_{\mathcal{L}(\bba)}\leq C\|A\|_{\mathcal{D}}\eq
	for all $A\in \mathcal{D}$.\\
	
	(ii) $\|AB\|_{\mathcal{D}}\leq \|A\|_{\mathcal{D}}\|B\|_{\mathcal{D}}$ for all  $A, B\in \mathcal{D}$.
	\end{defn}

For the sake of simplicity, we say that the subalgebra $\mathcal{D}$ is an {\em embedded subalgebra} if the norm on  $\mathcal{D}$ satisfies (i) and (ii).\\

If, in addition, $\mathfrak{F}_{\mathcal{D}}:=\mathfrak{F}(\bba)\bigcap\mathcal{D}$ is dense in $\mathcal{D}$ with respect to the norm $\|\cdot\|_{\mathcal{D}}$, we say that $\mathcal{D}$ is {\em approximable}. \\

If $\mathcal{D}$ is an approximable algebra we can continuously extend the trace and determinant from finite rank operators to the algebra $\mathcal{D}$. 

\begin{thm}\label{th2eqr} Let $\mathcal{D}\subset\mathcal{L}(\bba)$ be an approximable embedded subalgebra. The following statements are equivalent:

(i) The function $\Det(I+F):\mathfrak{F}_{\mathcal{D}}\rightarrow\ce$ admits a continuous extension in the $\mathcal{D}$-norm from $\mathfrak{F}_{\mathcal{D}}$ to $\mathcal{D}$.\\

 (ii) The linear functional $\Tr(F) $ is bounded in the $\mathcal{D}$-norm on $\mathfrak{F}_{\mathcal{D}}$.
\end{thm}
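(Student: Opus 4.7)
The plan is to establish the two implications by using the polynomial identity
\beq
\Det(I+zF) = \prod_{j=1}^{m}\bigl(1+z\lambda_j(F)\bigr), \qquad F\in\mathfrak{F}_{\mathcal{D}},\ z\in\ce,
\eq
which makes $z\mapsto\Det(I+zF)$ a polynomial in $z$ whose constant term equals $1$ and whose linear coefficient equals $\Tr(F)$. This is the bridge between the two functionals on finite rank operators, and both directions amount to exploiting it in complementary ways.

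For the implication (i)$\Rightarrow$(ii), I fix $F\in\mathfrak{F}_{\mathcal{D}}$ and consider $g(z):=\Det(I+zF)$. By (i) the continuous extension of $A\mapsto \Det(I+A)$ is locally bounded at the origin, so there exist $r,M>0$ with $|\Det(I+A)|\leq M$ for $\|A\|_{\mathcal{D}}\leq r$. For $|z|\leq r/\|F\|_{\mathcal{D}}$ we have $\|zF\|_{\mathcal{D}}\leq r$, hence $|g(z)|\leq M$ on this disc, and Cauchy's inequality applied to the entire function $g$ yields
\beq
|\Tr(F)|\;=\;|g'(0)|\;\leq\;\frac{M}{r}\,\|F\|_{\mathcal{D}},
\eq
which is exactly the required $\mathcal{D}$-boundedness of $\Tr$ on $\mathfrak{F}_{\mathcal{D}}$.

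For the converse (ii)$\Rightarrow$(i), the aim is to write $\Det(I+F)$ for finite rank $F$ as an expression involving only traces of powers of $F$, so that the hypothesis on $\Tr$ transfers to continuity of $\Det$ in the $\mathcal{D}$-norm. Taking logarithms of the eigenvalue product shows that, for $F\in\mathfrak{F}_{\mathcal{D}}$ with $\|F\|_{\mathcal{D}}<1$ (so the spectral radius of $F$ is strictly less than $1$ by submultiplicativity together with the embedding inequality $\|\cdot\|_{\mathcal{L}(\bba)}\leq C\|\cdot\|_{\mathcal{D}}$),
\beq
\Det(I+F)\;=\;\exp\!\left(\sum_{k=1}^{\infty}\frac{(-1)^{k+1}}{k}\,\Tr(F^k)\right).
\eq
The hypothesis $|\Tr(G)|\leq C\|G\|_{\mathcal{D}}$ combined with $\|F^k\|_{\mathcal{D}}\leq\|F\|_{\mathcal{D}}^k$ makes the series absolutely and uniformly convergent on every ball $\|F\|_{\mathcal{D}}\leq \rho<1$, hence continuous in the $\mathcal{D}$-norm; this gives a continuous function $d:\{\|F\|_{\mathcal{D}}<1\}\to\ce$ extending $\Det(I+\,\cdot\,)$ on $\mathfrak{F}_{\mathcal{D}}$ inside that ball.

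The main obstacle is propagating $d$ from the open unit ball out to all of $\mathcal{D}$. I would combine approximability with the multiplicative law $\Det((I+A)(I+B))=\Det(I+A)\Det(I+B)$ on finite ranks: given $F\in\mathcal{D}$ and $F_n\in\mathfrak{F}_{\mathcal{D}}$ with $F_n\to F$ in the $\mathcal{D}$-norm, fix a large $n_0$ and, for $n$ larger still, factor
\beq
I+F_n=(I+F_{n_0})\bigl(I+(I+F_{n_0})^{-1}(F_n-F_{n_0})\bigr),
\eq
so that $\Det(I+F_n)=\Det(I+F_{n_0})\,d\bigl((I+F_{n_0})^{-1}(F_n-F_{n_0})\bigr)$ and the last factor tends to $1$ because $F_n-F_{n_0}$ has small $\mathcal{D}$-norm. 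The delicate point is verifying that $(I+F_{n_0})^{-1}(F_n-F_{n_0})$ lies in $\mathcal{D}$ with controlled $\mathcal{D}$-norm: here one uses that $F_{n_0}$ is finite rank, so $(I+F_{n_0})^{-1}-I$ sits inside $\mathcal{D}$ (via a Neumann series or a direct finite-rank computation), and the embedding inequality converts operator-norm invertibility into a $\mathcal{D}$-norm estimate on the correction term. Once this is carried out, $\Det(I+F_n)$ is Cauchy, its limit is independent of the approximating sequence, and the resulting functional is the desired continuous extension of $\Det(I+\,\cdot\,)$ to all of $\mathcal{D}$.
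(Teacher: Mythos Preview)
The paper does not prove this theorem; it is stated as a background fact taken from \cite{goh:trace}, so there is no in-paper proof to compare against. I assess your argument directly.

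Your (i)$\Rightarrow$(ii) is correct and efficient: continuity of the extension at the origin gives $|\Det(I+A)|\le M$ on some ball $\{\|A\|_{\mathcal{D}}\le r\}$, and Cauchy's estimate for the polynomial $g(z)=\Det(I+zF)$ on the disc $|z|\le r/\|F\|_{\mathcal{D}}$ yields $|\Tr(F)|=|g'(0)|\le(M/r)\|F\|_{\mathcal{D}}$. For (ii)$\Rightarrow$(i), the exponential representation and the resulting continuity on $\{\|F\|_{\mathcal{D}}<1\}$ are fine, but the globalisation step has a genuine gap. The factorisation $I+F_n=(I+F_{n_0})\bigl(I+(I+F_{n_0})^{-1}(F_n-F_{n_0})\bigr)$ presupposes that $I+F_{n_0}$ is invertible, which nothing in the hypotheses guarantees; and when the limit satisfies $\Det(I+F)=0$, for large $n_0$ either $I+F_{n_0}$ is singular or, by the very finite-rank (Cramer/Cayley--Hamilton) formula you invoke, $\|(I+F_{n_0})^{-1}-I\|_{\mathcal{D}}$ scales like $1/|\Det(I+F_{n_0})|$ and blows up, so the correction $(I+F_{n_0})^{-1}(F_n-F_{n_0})$ is no longer forced into the unit ball of $\mathcal{D}$. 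Hence you cannot conclude that $\{\Det(I+F_n)\}$ is Cauchy, nor that the limit is independent of the approximating sequence. The route taken in \cite{goh:trace} avoids invertibility altogether: from the Plemelj--Smithies expansion one derives an estimate of the form
\[
|\Det(I+F)-\Det(I+G)|\le\|F-G\|_{\mathcal{D}}\,\exp\!\bigl(C(1+\|F\|_{\mathcal{D}}+\|G\|_{\mathcal{D}})\bigr),\qquad F,G\in\mathfrak{F}_{\mathcal{D}},
\]
which gives uniform continuity on bounded sets and hence the continuous extension by density, with no appeal to invertibility of any $I+F_{n_0}$.
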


The conditions above are satisfied in a good number of important examples. First we point out  how the conditions above are used to extend the trace and determinants from finite rank operators.\\

Let $\mathcal{D}\subset\mathcal{L}(\bba)$ be an approximable embedded subalgebra and assume  $\mathcal{D}$
 satisfies one of the equivalent conditions of Theorem \ref{th2eqr}. If $A\in \mathcal{D}$ we define the {\em trace} of $A$ and the {\em determinant } $I+A$ in the algebra $ \mathcal{D}$ by the equalities:
 \beq \Tr_{\mathcal{D}}(A):=\lim\limits_{n\rightarrow\infty}\Tr(F_n) \mbox{ and } \Det\limits_{\mathcal{D}}(I+A):=\lim\limits_{n\rightarrow\infty}\Det(I+F_n) ,  \eq
 where $\|A-F_n\|_{\mathcal{D}}\rightarrow 0$ with $F_n\in \mathfrak{F}_{\mathcal{D}}(\bba)$. As a consequence of the continuity of the extended trace and determinant, it can be shown that these definitions are independent of the choice of the approximative sequence $F_n$.\\

The following theorem will be useful to establish the existence of solutions for homogeneous differential equations. 

\begin{thm}\label{th2eqra} Let $\mathcal{D}\subset\mathcal{L}(\bba)$ be an approximable embedded subalgebra. Suppose that 
the function $\Det(I+F):\mathfrak{F}_{\mathcal{D}}\rightarrow\ce$ admits a continuous extension in the $\mathcal{D}$-norm from $\mathfrak{F}_{\mathcal{D}}$ to $\mathcal{D}$. Then, an operator $I+A$
with $A\in \mathcal{D}$ is invertible in $\mathcal{L}(\bba)$ if and only if $\Det_{\mathcal{D}}(I+A)\neq 0.$
	\end{thm}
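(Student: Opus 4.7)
My plan is to combine three ingredients: Fredholm theory applied to $I+A$, the extension of the multiplicative property of $\Det$ from $\mathfrak{F}_{\mathcal{D}}$ to $\mathcal{D}$, and the continuity of $\Det_{\mathcal{D}}$ guaranteed by the hypothesis.

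The first preparatory observation is that every $A\in\mathcal{D}$ is compact on $\mathcal{B}$. Indeed, approximability gives a sequence $F_n\in\mathfrak{F}_{\mathcal{D}}$ with $\|A-F_n\|_{\mathcal{D}}\to 0$, and the embedding condition $\|\cdot\|_{\mathcal{L}(\mathcal{B})}\le C\|\cdot\|_{\mathcal{D}}$ yields $F_n\to A$ in the operator norm, so $A$ is a norm-limit of finite rank operators and hence compact. Consequently $I+A$ is Fredholm of index zero, and its invertibility in $\mathcal{L}(\mathcal{B})$ is equivalent to $\ker(I+A)=\{0\}$. Next I would extend the multiplicative identity from finite rank operators to all of $\mathcal{D}$: if $F_n\to A$ and $G_n\to B$ in $\|\cdot\|_{\mathcal{D}}$, the submultiplicativity of $\|\cdot\|_{\mathcal{D}}$ gives $F_n+G_n+F_nG_n\to A+B+AB$ in $\mathcal{D}$, each term lies in $\mathfrak{F}_{\mathcal{D}}$, and passing to the limit in the finite-rank identity together with continuity of $\Det_{\mathcal{D}}$ yields
\[
\Det_{\mathcal{D}}((I+A)(I+B))=\Det_{\mathcal{D}}(I+A)\,\Det_{\mathcal{D}}(I+B).
\]

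For the forward direction ($I+A$ invertible $\Rightarrow \Det_{\mathcal{D}}(I+A)\neq 0$), the strategy is to exhibit $B\in\mathcal{D}$ with $(I+A)(I+B)=I$; the multiplicative identity then forces $\Det_{\mathcal{D}}(I+A)\cdot\Det_{\mathcal{D}}(I+B)=\Det_{\mathcal{D}}(I)=1$, so $\Det_{\mathcal{D}}(I+A)\neq 0$. The only candidate is $B=-A(I+A)^{-1}=-(I+A)^{-1}A$, and $B\in\mathcal{L}(\mathcal{B})$ is automatic; to place $B$ in $\mathcal{D}$, I would approximate $A$ by $F_n\in\mathfrak{F}_{\mathcal{D}}$, set $B_n=-F_n(I+F_n)^{-1}$, note $B_n\in\mathfrak{F}_{\mathcal{D}}$ since its range is contained in the (finite-dimensional) range of $F_n$, and prove $\{B_n\}$ is $\mathcal{D}$-Cauchy using the resolvent identity together with the uniform bound $\sup_n\|(I+F_n)^{-1}\|_{\mathcal{L}(\mathcal{B})}<\infty$, which follows from the operator-norm convergence $I+F_n\to I+A$ and the invertibility of $I+A$.

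For the converse ($\Det_{\mathcal{D}}(I+A)\neq 0\Rightarrow I+A$ invertible), continuity of $\Det_{\mathcal{D}}$ gives $\Det(I+F_n)\to\Det_{\mathcal{D}}(I+A)\neq 0$, so $\Det(I+F_n)\neq 0$ and thus $I+F_n$ is invertible in $\mathcal{L}(\mathcal{B})$ for $n$ large. Arguing by contradiction, assume the Fredholm criterion fails, i.e.\ $Ax_0=-x_0$ for some $x_0\neq 0$. Then $(I+F_n)x_0=(F_n-A)x_0\to 0$ while $\|x_0\|\neq 0$; decomposing $x_0$ along $M_{F_n}\oplus N_{F_n}$ as in \eqref{mm35} the $N_{F_n}$-component tends to $0$, and the finite-dimensional block $I_1+(F_n)_1$ acts on a vector of norm bounded away from zero and produces an image tending to zero, forcing $\det(I_1+(F_n)_1)=\Det(I+F_n)\to 0$ in contradiction with the nonvanishing assumption.

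The principal obstacle is the membership $B\in\mathcal{D}$ in the forward direction: $\mathcal{D}$ is not assumed inverse-closed in $\mathcal{L}(\mathcal{B})$, so one cannot simply multiply $A\in\mathcal{D}$ by the bounded operator $(I+A)^{-1}$ and land back in $\mathcal{D}$. One must exploit that the approximants $B_n$ are genuinely finite rank (their range sits in the finite-dimensional range of $F_n$) together with the $\mathcal{D}$-algebra structure to control their $\mathcal{D}$-norm and prove the Cauchy property. A subsidiary obstacle is the quantitative step in the converse, where one needs to convert the qualitative existence of approximate eigenvectors into the vanishing of $\Det(I+F_n)$ despite the fact that the finite-dimensional block size $\dim M_{F_n}$ may grow unboundedly.
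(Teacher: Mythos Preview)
The paper does not actually prove this theorem; it is quoted from Gohberg--Goldberg--Krupnik \cite{goh:trace} as background. So there is no ``paper's own proof'' to compare against, and your attempt must stand on its own.

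Your overall strategy (compactness of $A$, extension of multiplicativity, then two implications) is reasonable, but both directions contain real gaps that you yourself flag as ``obstacles'' without resolving them.

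\emph{Forward direction.} You want $B_n=-F_n(I+F_n)^{-1}$ to be $\mathcal{D}$-Cauchy. The resolvent identity you invoke produces factors $(I+F_n)^{-1}$, $(I+F_m)^{-1}$ that you only control in the \emph{operator} norm, not in $\|\cdot\|_{\mathcal{D}}$. Since $\mathcal{D}$ is merely a subalgebra (not a two-sided ideal of $\mathcal{L}(\mathcal{B})$), an estimate of the type $\|XA\|_{\mathcal{D}}\le \|X\|_{\mathcal{L}(\mathcal{B})}\|A\|_{\mathcal{D}}$ is unavailable, and the Cauchy argument does not go through as written. It is also not clear that $B_n\in\mathcal{D}$ in the first place: $B_n$ is finite rank, but $\mathfrak{F}_{\mathcal{D}}=\mathfrak{F}(\mathcal{B})\cap\mathcal{D}$ need not contain every finite rank operator.

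\emph{Converse direction.} From $(I+F_n)x_0\to 0$ you only get that the smallest singular value of the finite block $I_1+(F_n)_1$ tends to $0$. Since $\dim M_{F_n}$ may grow without bound while the remaining eigenvalues stay of size $O(1)$, the product $\Det(I+F_n)$ need not tend to $0$. This step is genuinely false as stated.

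The standard remedy (and essentially the argument in \cite{goh:trace}) avoids both difficulties by a single factorisation. Pick \emph{one} $F\in\mathfrak{F}_{\mathcal{D}}$ with $\|A-F\|_{\mathcal{D}}<1$. Assuming $\mathcal{D}$ is complete, the Neumann series $\sum_{k\ge 1}(-(A-F))^k$ converges in $\mathcal{D}$ and gives $(I+(A-F))^{-1}=I+C$ with $C\in\mathcal{D}$; multiplicativity then yields $\Det_{\mathcal{D}}(I+(A-F))\,\Det_{\mathcal{D}}(I+C)=1$, so $\Det_{\mathcal{D}}(I+(A-F))\neq 0$. Now write
\[
I+A=(I+(A-F))\bigl(I+G\bigr),\qquad G:=(I+C)F\in\mathcal{D},
\]
and observe that $G$ has finite rank (its range lies in $(I+C)(\mathrm{ran}\,F)$), hence $G\in\mathfrak{F}_{\mathcal{D}}$. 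Multiplicativity gives $\Det_{\mathcal{D}}(I+A)=\Det_{\mathcal{D}}(I+(A-F))\cdot\Det(I+G)$, and since $I+(A-F)$ is invertible, $I+A$ is invertible iff $I+G$ is. Both implications now reduce to the finite-rank fact $\Det(I+G)\neq 0\Leftrightarrow I+G$ invertible, with no limiting argument needed.
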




\section{The Poincar\'e Determinant for operators on the torus}
In this section we introduce the Poincar\'e determinant for a suitable algebra of operators on the torus $\To^n$ and the the main results of this work. 
We first define a suitable algebra of operators on $\ell^{p}(\Zn)$ spaces. \\

\begin{defn} Let $\ell^{1}(\Zn\times\Zn)$ be the space of matrices $A=(a_{jk})_{(j,k)\in  \Zn\times\Zn}$ with complex entries  such that 
\beq\sum\limits_{(j,k)\in  \Zn\times\Zn}|a_{jk}|<\infty.\label{inpoi}\eq 
We endow $\ell^{1}(\Zn\times\Zn)$  with the  $\ell^{1}$-norm: 
\[ \|A\|_{\ell^{1}}:=\sum\limits_{(j,k)\in\in  \Zn\times\Zn}|a_{jk}|.\]

\end{defn}
We now note that if $A\in \ell^{1}(\Zn\times\Zn) $, then we also have $^tA\in \ell^{1}(\Zn\times\Zn)$, where  $^tA$ denote the transpose of $A$ defined by $^tA=(a_{kj})_{(j,k)\in  \Zn\times\Zn}$. Moreover we have  $\|A\|_{\ell^{1}}=\|^tA\|_{\ell^{1}}.$\\

\begin{thm} Let $1\leq p<\infty$ and $A\in\ell^{1}(\Zn\times\Zn)$. If 
$x\in \ell^{p}(\Zn)$ we define $\tilde{A}x:\Zn\rightarrow\ce$ by 
\[(\tilde{A}x)_k:= \sum\limits_{j\in\Zn}a_{jk}x_j.\]
Then $(\tilde{A}x)_k$ is well-defined, $\tilde{A}x\in  \ell^{p}(\Zn)$, $x\rightarrow \tilde{A}x$ defines a bounded operator from $\ell^{p}(\Zn)$ into $\ell^{p}(\Zn)$ and 
\[\|\tilde{A}\|\leq \|A\|_{\ell^{1}}.\]
\end{thm}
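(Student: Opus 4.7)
The plan is to recognize the statement as an instance of the classical Schur test: the matrix $A=(a_{jk})$ with $\sum_{j,k}|a_{jk}|<\infty$ automatically has both row-sums and column-sums bounded by $\|A\|_{\ell^1}$, so the associated operator is bounded on every $\ell^p$.

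First I would settle the well-definedness of $(\tilde Ax)_k=\sum_j a_{jk}x_j$. For $x\in\ell^p(\Zn)$ we have the trivial pointwise bound $|x_j|\le\|x\|_{\ell^p}$, and for each fixed $k$,
\[\sum_{j\in\Zn}|a_{jk}|\le \sum_{(i,\ell)\in\Zn\times\Zn}|a_{i\ell}|=\|A\|_{\ell^1}<\infty,\]
so $\sum_j|a_{jk}||x_j|\le\|x\|_{\ell^p}\|A\|_{\ell^1}$, which gives absolute convergence of the defining series.

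Next I would handle the $\ell^p$-bound via the Schur trick. For $p=1$, the argument is a clean application of Fubini/Tonelli:
\[\sum_k|(\tilde Ax)_k|\le\sum_k\sum_j|a_{jk}||x_j|=\sum_j|x_j|\sum_k|a_{jk}|\le\|A\|_{\ell^1}\|x\|_{\ell^1},\]
using that $\sum_k|a_{jk}|\le\|A\|_{\ell^1}$ for every $j$. For $1<p<\infty$, let $q$ be the conjugate exponent and split $|a_{jk}|=|a_{jk}|^{1/q}|a_{jk}|^{1/p}$; H\"older's inequality then yields
\[|(\tilde Ax)_k|\le\Bigl(\sum_j|a_{jk}|\Bigr)^{1/q}\Bigl(\sum_j|a_{jk}||x_j|^p\Bigr)^{1/p}\le\|A\|_{\ell^1}^{1/q}\Bigl(\sum_j|a_{jk}||x_j|^p\Bigr)^{1/p}.\]
Raising to the $p$-th power, summing in $k$, swapping the order of summation and using $\sum_k|a_{jk}|\le\|A\|_{\ell^1}$ gives
\[\|\tilde Ax\|_{\ell^p}^p\le\|A\|_{\ell^1}^{p/q}\sum_j|x_j|^p\sum_k|a_{jk}|\le\|A\|_{\ell^1}^{p/q+1}\|x\|_{\ell^p}^p=\|A\|_{\ell^1}^p\|x\|_{\ell^p}^p,\]
which is the claimed inequality. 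Linearity of $x\mapsto\tilde Ax$ is immediate from the defining formula, so together with the norm bound we obtain boundedness from $\ell^p(\Zn)$ into $\ell^p(\Zn)$.

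There is no genuine obstacle here; the only point requiring a small amount of care is the Fubini-type interchange of summations, which is justified precisely by the assumption $A\in\ell^1(\Zn\times\Zn)$ combined with the pointwise bound on $x$ coming from $x\in\ell^p$. The same proof would apply, mutatis mutandis, to show the bound for the transpose, consistent with the observation that $\|{}^tA\|_{\ell^1}=\|A\|_{\ell^1}$.
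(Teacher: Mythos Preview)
Your proof is correct and follows essentially the same approach as the paper: both observe that the row-sums and column-sums of $|a_{jk}|$ are bounded by $\|A\|_{\ell^1}$ and then invoke the Schur test, yielding $\|\tilde A\|\le \|A\|_{\ell^1}^{1/p}\|A\|_{\ell^1}^{1/p'}=\|A\|_{\ell^1}$. The only difference is presentational: the paper cites the Schur test as a black box, whereas you unfold its proof explicitly via the H\"older splitting $|a_{jk}|=|a_{jk}|^{1/q}|a_{jk}|^{1/p}$.
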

\begin{proof} Let $p'$ be such that $\frac{1}{p}+\frac{1}{p'}=1$. We observe that the matrix $^tA$ is the kernel of the integral operator $\tilde{A}$ with respect to the counting measure on $\mathcal{P}(\Zn)$. On the other hand, since 
\[ \sum\limits_{j\in  \Zn}|a_{jk}|,\,\,\sum\limits_{k\in  \Zn}|a_{jk}|\leq \sum\limits_{(j,k)\in  \Zn\times\Zn}|a_{jk}|=\|A\|_{\ell^{1}}.\]
Then the conclusions follows from the Schur test for integral operators and we also have
$\|\tilde{A}\|\leq\|A\|_{\ell^{1}}^{\frac{1}{p}+\frac{1}{p'}}=\|A\|_{\ell^{1}}.$
\end{proof}
We  note that if  $A\in\ell^{1}(\Zn\times\Zn)$, then  $A$ is also the matrix of the operator $\tilde{A}$ with respect to the standard basis.  

\begin{defn} Let $1\leq p<\infty$.  We denote by $\Dcal_p$ the algebra of all operators $\tilde{A}$ on $\ell^p(\Zn)$ where its matrix $A$ with respect to the standard basis   belongs to $\ell^{1}(\Zn\times\Zn)$. We endow $\Dcal_p$ with the norm
\[\|\tilde{A}\|_{\Dp}=\sum\limits_{(j,k)\in\Zn\times\Zn}|a_{jk}|.\]
The value $p$ indicates that the operator is considered acting on the space $\bba=\ell^{p}(\Zn)$ despite the norm is independent of $p$. Henceforth we identify the operator $\tilde{A}$ in $\Dcal_p$ with the matrix $A$. That $\Dcal_p$ is indeed an algebra follows from the fact that  $\ell^{1}(\Zn\times\Zn)$ is an algebra, and it will be clarified below in the next proposition.
\end{defn}

\begin{prop} Let $p$ be such that $1\leq p<\infty$. The space  $\Dp$ is  an approximable embedded subalgebra of $\mathcal{L}(\bba)$ for  $\bba=\ell^{p}(\Zn)$. Moreover, the  determinant $\Det(I+F)$ can be continuously extended to $\Dp$
\end{prop}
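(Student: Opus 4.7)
The plan is to verify the three structural properties in the definition of approximable embedded subalgebra, and then apply Theorem \ref{th2eqr} to get the continuous extension of the determinant. The first of these is essentially free from the previous theorem: for every $A\in\ell^1(\Zn\times\Zn)$ one has $\|\tilde A\|_{\mathcal{L}(\mathcal{B})}\le\|A\|_{\ell^1}=\|\tilde A\|_{\Dp}$, giving (i) with $C=1$. For (ii), submultiplicativity, I would compute the matrix $C$ of $\tilde A\tilde B$ entry by entry and use $|c_{mk}|\le\sum_j|b_{mj}||a_{jk}|$; setting $\alpha_j=\sum_k|a_{jk}|$ and $\beta_j=\sum_m|b_{mj}|$, one has $\|C\|_{\ell^1}\le\sum_j\alpha_j\beta_j\le\bigl(\sum_j\alpha_j\bigr)\bigl(\sum_j\beta_j\bigr)=\|A\|_{\ell^1}\|B\|_{\ell^1}$, where the middle inequality holds because all summands are nonnegative, so the ``diagonal'' is dominated by the full product.

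Next, for approximability, given $A=(a_{jk})\in\ell^1(\Zn\times\Zn)$ let $A_N$ be the truncation whose $(j,k)$-entry equals $a_{jk}$ for $|j|,|k|\le N$ and zero otherwise. Then $A_N$ has finite support, so $\tilde A_N$ has finite rank (its range lies in the span of the $e_k$ with $|k|\le N$), hence $\tilde A_N\in\mathfrak{F}_{\Dp}$. Absolute convergence of $\sum_{j,k}|a_{jk}|$ together with the dominated convergence theorem for the counting measure yields $\|A-A_N\|_{\ell^1}\to 0$, which gives density.

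It then remains to verify the trace-boundedness condition (ii) of Theorem \ref{th2eqr}, which is the step I expect to require the most care. For a finite rank $F\in\Dp$ with matrix $(f_{jk})$, I would write $F=\sum_{i=1}^{r}\xi^{(i)}\otimes\eta^{(i)}$ with $\xi^{(i)}\in\ell^{p'}(\Zn)$ and $\eta^{(i)}\in\ell^{p}(\Zn)$, so that $f_{jk}=\sum_{i}\xi^{(i)}_j\eta^{(i)}_k$ and the intrinsic trace of Section 2 equals $\Tr(F)=\sum_{i}\sum_{j}\xi^{(i)}_j\eta^{(i)}_j$. Because $\sum_{j,k}|f_{jk}|<\infty$, Fubini (on $\N$ with counting measure) lets me exchange the sums and obtain $\Tr(F)=\sum_{j}f_{jj}$, an identity that is independent of the chosen rank representation. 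The estimate
\[
|\Tr(F)|\le\sum_{j\in\Zn}|f_{jj}|\le\sum_{(j,k)\in\Zn\times\Zn}|f_{jk}|=\|F\|_{\Dp}
\]
then shows that $\Tr$ is bounded on $\mathfrak{F}_{\Dp}$ with constant $1$. By Theorem \ref{th2eqr}, the determinant $\Det(I+F)$ on $\mathfrak{F}_{\Dp}$ admits a continuous extension to $\Dp$ in the $\Dp$-norm, which completes the proof. The only genuinely delicate point, beyond mechanical verification of the algebra axioms, is the identification $\Tr(F)=\sum_j f_{jj}$ for finite rank $F$ whose matrix may have infinite support; this is where I would be most careful to justify the interchange of summations using membership in $\ell^1(\Zn\times\Zn)$.
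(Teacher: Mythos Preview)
Your proof is correct and follows essentially the same strategy as the paper: verify the embedding inequality, submultiplicativity, density of finite-rank operators, and boundedness of the trace, then invoke Theorem~\ref{th2eqr}. Your submultiplicativity argument via the direct $\ell^1$ estimate $\sum_j\alpha_j\beta_j\le(\sum_j\alpha_j)(\sum_j\beta_j)$ is in fact cleaner than the paper's Cauchy--Schwarz detour through $\ell^2$, and your explicit justification of $\Tr(F)=\sum_j f_{jj}$ for a general finite-rank $F\in\Dp$ (possibly with infinite matrix support) fills a step the paper leaves implicit.
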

\begin{proof} To see that $\Dp$ is a subalgebra of $\mathcal{L}(\bba)$. Let $A,B\in \ell^{1}(\Zn\times\Zn)$. We note that, since $\ell^{1}(\Zn\times\Zn)\subset\ell^{2}(\Zn\times\Zn)$ and  by Cauchy-Schwarz inequality we have
\[\sum\limits_{(k,l)\in\Zn\times\Zn}|\sum\limits_{i\in\Zn}a_{ki}b_{il}|\leq \sum\limits_{(k,l)\in\Zn\times\Zn}\sum\limits_{i\in\Zn}|a_{ki}b_{il}|\leq \|A\|_{\ell^{2}}\|B\|_{\ell^{2}} .\]
Thus, the multiplication $AB$ is well defined,  $\ell^{1}(\Zn\times\Zn)$ is an algebra and consequently $\Dp$ is a subalgebra of $\mathcal{L}(\bba)$.\\

Now, let $\tilde{A}\in \Dp $, $\epsilon >0$, and $A=(a_{jk})_{j,k\in  \Zn\times\Zn}$ the matrix  corresponding to $\tilde{A}$. There exists a finite set $G\subset \Zn\times\Zn $ such that $\sum\limits_{(j,k)\in H^\complement}|a_{jk}|<\epsilon,$  provided $H\supseteq G$. We define the matrix  $A_G:=(a_{jk})_{(j,k)\in G^\complement}$, where $G^\complement$ is the complement set of $G$ . Then, the corresponding operator $F:=\widetilde{A_G}$ is a bounded finite rank operator and 

\[\|\tilde{A}-F\|_{\Dp}<\epsilon .\]

On the other hand, we observe that
\beq |\Tr(A)|=|\sum\limits_{j\in\Zn}a_{jj}|\leq \|A\|_{\Dp}.\eq
Thus, $\Dp$ satisfies the assumption (ii) of Theorem \ref{th2eqr}, and therefore the determinant $\Det(I+F)$ can be continuously extended to $\Dp$. 
\end{proof}
We call the above determinant on $\Dp$ the {\em Poincar\'e determinant on} $\Dp$, or just the {\em Poincar\'e determinant } when there is not room for ambiguity. \\

%

We now consider operators on the torus $\Tn$. In this special domain we can  take advantage of the Fourier transform and the duality with $\Zn$ to relate the operators on the torus with the operators on  $\Dp$. By doing so we can define a  Poincar\'e determinant type for operators on the torus. From now on, we will focus on the special case $p=2$ to take advantage of the isometry of the Fourier transform.
The collection $\{e^{2\pi ix\cdot k}\}_{k\in\Zn}$ is an orthonormal basis of $L^2(\Tn)$. We denote the Fourier transform on the torus $\Tn$ by $\fou_{\Tn}$ which is defined by
\[(\fou_{\Tn}\varphi)(k)=\int\limits_{\Tn}e^{-2\pi ix\cdot k}\varphi(x)dx,\]
for all $\varphi\in C^{\infty}(\Tn).$
\begin{thm} Let $\Gamma:\Dcal_2\longrightarrow\mathcal{L}(L^2(\Tn))$ be the mapping defined by \[\Gamma(A):=\fou_{\Tn}^{-1}A\fou_{\Tn},\]
for every $A\in \Dcal_2.$ Then, the following properties hold:\\
\begin{itemize}
\item[(i)] $\Gamma$ is  an  algebra isomorphism between $\Dcal_2$ and  $\Gamma(\Dcal_2)$. By considering the norm on $\Gamma(\Dcal_2)$ induced from $\Dcal_2$, we have an isometric isomorphism. 
\item[(ii)]  $\Gamma(\Dcal_2)$ is an approximable embedded algebra and  for every operator $T\in \Gamma(\Dcal_2)$ by using the algebra isomorphism $\Gamma$ we can define a trace for $T$ and a determinant for $I+T$.

\end{itemize}
 
\end{thm}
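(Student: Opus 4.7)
The plan is to exploit the fact that $\fou_{\Tn}:L^2(\Tn)\to\ell^2(\Zn)$ is a unitary isomorphism (Plancherel), so $\Gamma$ is conjugation by a unitary and transports \emph{all} the relevant structure of $\Dcal_2$ onto $\Gamma(\Dcal_2)$ without loss: rank, operator norm, trace, and approximability all pass through $\Gamma$ essentially tautologically.

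For (i), I would first check that $\Gamma$ is a linear algebra homomorphism by the computation
\[
\Gamma(AB)=\fou_{\Tn}^{-1}AB\,\fou_{\Tn}
=(\fou_{\Tn}^{-1}A\fou_{\Tn})(\fou_{\Tn}^{-1}B\fou_{\Tn})=\Gamma(A)\Gamma(B),
\]
and that it is injective since $\fou_{\Tn}$ is invertible. Then I would equip $\Gamma(\Dcal_2)$ with the transported norm $\|\Gamma(A)\|_{\Gamma(\Dcal_2)}:=\|A\|_{\Dp}$, which makes $\Gamma$ an isometric algebra isomorphism onto its image by construction.

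For (ii), I would verify the two axioms of an embedded subalgebra and then approximability. The boundedness axiom follows from Plancherel together with the estimate proved in the previous proposition: $\|\Gamma(A)\|_{\mathcal{L}(L^2(\Tn))}=\|A\|_{\mathcal{L}(\ell^2(\Zn))}\leq\|A\|_{\Dp}=\|\Gamma(A)\|_{\Gamma(\Dcal_2)}$. Submultiplicativity is inherited from $\Dp$ via the algebra isomorphism. For approximability, the key point is that unitary conjugation preserves rank, so $\Gamma$ maps $\mathfrak{F}_{\Dcal_2}$ bijectively onto $\mathfrak{F}(L^2(\Tn))\cap\Gamma(\Dcal_2)$; density in $\Gamma(\Dcal_2)$ with respect to $\|\cdot\|_{\Gamma(\Dcal_2)}$ then follows from the density statement for $\Dp$ in the previous proposition and the fact that $\Gamma$ is an isometry. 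Finally, to invoke Theorem \ref{th2eqr} and obtain the continuous extension of the determinant, I would check the trace-bound condition on finite-rank elements: if $F=\Gamma(F_0)$ with $F_0\in\mathfrak{F}_{\Dcal_2}$, then $\Tr(F)=\Tr(F_0)$ by similarity-invariance of the trace, hence $|\Tr(F)|\leq\|F_0\|_{\Dp}=\|F\|_{\Gamma(\Dcal_2)}$; Theorem \ref{th2eqr} supplies the continuous extension of $\Det(I+\cdot)$, and the extended trace and determinant on $T\in\Gamma(\Dcal_2)$ are then defined by pulling back the ones on $\Dcal_2$, namely $\Tr(T):=\Tr_{\Dcal_2}(\Gamma^{-1}(T))$ and $\Det(I+T):=\Det_{\Dcal_2}(I+\Gamma^{-1}(T))$.

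There is no serious obstacle: every assertion reduces to the observation that $\Gamma$ is a unitary conjugation. The only minor point of care is the identification $\mathfrak{F}(L^2(\Tn))\cap\Gamma(\Dcal_2)=\Gamma(\mathfrak{F}_{\Dcal_2})$, which is where preservation of rank under unitary conjugation is used, and the verification that the trace bound transfers; both are routine but should be explicitly stated so that the hypotheses of Theorem \ref{th2eqr} are demonstrably met.
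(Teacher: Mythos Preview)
Your proposal is correct and follows essentially the same route as the paper: part (i) is the identical multiplicativity computation, and part (ii) transports the embedded-algebra structure, approximability, and the trace/determinant from $\Dcal_2$ to $\Gamma(\Dcal_2)$ via the unitary conjugation $\Gamma$. If anything, your write-up is more explicit than the paper's in checking the two embedded-algebra axioms, in stating the correct identification $\mathfrak{F}(L^2(\Tn))\cap\Gamma(\Dcal_2)=\Gamma(\mathfrak{F}_{\Dcal_2})$ (the paper asserts the stronger $\mathfrak{F}_{\Dcal_2}=\mathfrak{F}(\ell^2)$, which is not needed and is in fact false), and in verifying the trace bound so that Theorem~\ref{th2eqr} applies.
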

\begin{proof} (i) It is clear that $\Gamma $ is linear. To see  that $\Gamma$ preserves the multiplication we observe that, for $A, B\in \Dcal_2$
 we have 
\begin{align*}\Gamma(AB)&= \fou_{\Tn}^{-1}AB\fou_{\Tn}\\
&= \fou_{\Tn}^{-1}A\fou_{\Tn}\fou_{\Tn}^{-1}B\fou_{\Tn}\\
&=( \fou_{\Tn}^{-1}A\fou_{\Tn})(\fou_{\Tn}^{-1}B\fou_{\Tn})\\
&=\Gamma(A)\Gamma(B). 
\end{align*}
The remaining part of (i) follows immediately.\\

(ii) In order to prove that $\Gamma(\Dcal_2)$ is an approximable embedded algebra, we point out that the isomorphism $\Gamma$ preserves the class of finite rank operators $\mathfrak{F}_{\Dcal_2}(\ell^2)=\mathfrak{F}(\ell^2)$, that is   \[\Gamma(\mathfrak{F}(\ell^2))=\mathfrak{F}(L^2(\Tn)).\]
The above holds since given $G\in\mathfrak{F}(L^2(\Tn))$, we can write
\[G=\fou_{\Tn}^{-1}(\fou_{\Tn}G\fou_{\Tn}^{-1})\fou_{\Tn}.\]
Hence $\Gamma(\fou_{\Tn}G\fou_{\Tn}^{-1})=G$.\\

Since the Poincar\'e determinant is defined on  $\Dcal_2$ as well as the trace, then via the isomorphism $\Gamma$ the same can be done on the embedded subalgebra $\Gamma(\Dcal_2)$ of operators on $L^2(\Tn)$. \\

Therefore $\Gamma(\Dcal_2)$ is an approximable embedded algebra and  for every operator $T\in \Gamma(\Dcal_2)$ by using the algebra isomorphism $\Gamma$ we can define a trace for $T$ and a determinant for $I+T$. \end{proof}
The theorem above justifies the following definition.
\begin{defn} We will call  the induced determinant on  $\Gamma(\Dcal_2)$, the {\em Poincar\'e determinant }, for operators on  $L^2(\Tn)$, but to distinguish the domains we will denote it by $\Det_{\Gamma}$. Thus, for $T\in \Gamma(\Dcal_2)$ we define 
$\Det_{\Gamma}(I+T)$ by 
\beq\label{poidef57} \Det_{\Gamma}(I+T):=\Det(I+A),\eq
where $T=\fou_{\Tn}^{-1}A\fou_{\Tn}$.\\
\end{defn}

In the special case of pseudodifferential operators on the torus $\Tn$   we  establish explicit formulas in terms of the symbol from the isomorphism $\Gamma$. We first recall some basic definitions and properties.\\

\begin{defn} {\bf (Finite differences $\Delta_{k}^{\alpha}$).} Let $\sigma:\zn\rightarrow \Co$ and $1\leq i,j\leq n.$ Let $\delta_j\in \ene_0^n$ be defined  by
\begin{equation}
 (\delta_j)_i:=\left\{
\begin{array}{rl}
1,& \mbox{ if } i=j\\
0,& \mbox{ if } i\neq j.
\end{array} \right.
\label{krona}\end{equation}

We define the {\em forward partial difference operator}   $\Delta_{\xi_j}$ by
\[\Delta_{k_j}\sigma(k)=\sigma(k+\delta_j)-\sigma(k),\] 
and for $\alpha\in \ene_0^n$ define
\[\Delta_{k}^{\alpha}:=\Delta_{k_1}^{\alpha_1}\cdots\Delta_{k_n}^{\alpha_n}.\]
\end{defn}

\begin{prop}\label{combal} {\bf (Formulae for  $\Delta_{k}^{\alpha}$)} Let $\phi:\zn\rightarrow \Co$. We have 
\beq\Delta_{k}^{\alpha}\phi(k)\,=\, \sum\limits_{\beta\leq\alpha}(-1)^{|\alpha-\beta|}{\alpha\choose\beta} \phi(k+\beta).\label{cohr}\eq
\end{prop}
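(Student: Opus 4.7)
The plan is to prove the identity by induction on the order of differencing, handling the one-dimensional case first and then bootstrapping to multi-indices via the commutativity of the partial difference operators acting on disjoint coordinates.

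First I would reduce to the one-dimensional setting. Since $\Delta_{k}^{\alpha}=\Delta_{k_1}^{\alpha_1}\cdots\Delta_{k_n}^{\alpha_n}$ and the operators $\Delta_{k_i}$ and $\Delta_{k_j}$ commute for $i\neq j$ (they shift in different coordinates), once the scalar identity
\[
\Delta_{k_j}^{\alpha_j}\phi(k)\,=\,\sum_{\beta_j=0}^{\alpha_j}(-1)^{\alpha_j-\beta_j}\binom{\alpha_j}{\beta_j}\phi(k+\beta_j\delta_j)
\]
is established, a straightforward iteration over $j=1,\dots,n$ combined with the multinomial factorization $\binom{\alpha}{\beta}=\prod_{j=1}^n\binom{\alpha_j}{\beta_j}$ and $(-1)^{|\alpha-\beta|}=\prod_{j=1}^n(-1)^{\alpha_j-\beta_j}$ yields \eqref{cohr}. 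This reduction is purely bookkeeping.

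Next I would prove the one-dimensional formula by induction on $m=\alpha_j\in\N_0$. The base case $m=0$ is trivial, and $m=1$ is the definition $\Delta_{k_j}\phi(k)=\phi(k+\delta_j)-\phi(k)$. For the inductive step, assuming the formula holds for $m$, I write
\[
\Delta_{k_j}^{m+1}\phi(k)\,=\,\Delta_{k_j}^{m}\phi(k+\delta_j)-\Delta_{k_j}^{m}\phi(k),
\]
apply the inductive hypothesis to each term, reindex the first sum by $\gamma=\beta+1$, and then invoke Pascal's rule $\binom{m}{\gamma-1}+\binom{m}{\gamma}=\binom{m+1}{\gamma}$ to collect terms. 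The boundary values $\gamma=0$ and $\gamma=m+1$ match automatically because $\binom{m}{-1}=\binom{m}{m+1}=0$, so the combined sum runs cleanly from $0$ to $m+1$ with the correct binomial coefficients and the right sign $(-1)^{m+1-\gamma}$.

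There is no real obstacle here: the identity is a standard discrete analogue of the Leibniz/Newton forward-difference formula, and both the univariate induction and the reduction from multi-indices to univariate factors are routine. The only mild care needed is in the reindexing step of the inductive proof, where the sign $(-1)^{m-\beta}$ must be tracked correctly after the substitution $\gamma=\beta+1$ so that it becomes $(-1)^{m+1-\gamma}$ and matches the sign coming from the second sum after applying Pascal's identity.
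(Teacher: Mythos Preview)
Your argument is correct and is the standard route: reduce to one variable via the commutativity of the partial shifts, then prove the scalar forward-difference identity by induction on $m$ using Pascal's rule. The paper does not supply its own proof of this proposition --- it is stated as a background formula (it is the discrete Newton forward-difference expansion, cf.\ \cite{rt:book}) --- so there is nothing to compare against; your write-up would serve as a complete proof.
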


We now recall the definition of the  toroidal symbol classes. 
We  define $\jpxi :=(1+|k|^2)^\half$ for $k\in\zn$, with $|k|^2=k_1^2+\cdots +k_n^2$.\\

\begin{defn} Let $m\in\ar$. Then the {\em toroidal symbol class} $S_{1,0}^m(\mathbb{T}^n\times \zet^n)$ consists of those functions $a(x, k)$ which are smooth in $x$ for all $k\in\zn$, and which satisfy {\em toroidal symbol inequalities} 
\beq |\Delta_{k}^{\alpha}\partial_x^{\beta}a(x,k)|\leq C_{\alpha\beta}\jpxi^{m-|\alpha|}\eq
for every $x\in\tn, k\in\zn$ and for all $\alpha, \beta\in \ene_0^n$.
\end{defn} 

The toroidal quantisation associated to a symbol $\sigma\in S_{1,0}^m(\mathbb{T}^n\times \zet^n)$ is the densely defined  operator $T=\sigma(x,D)$ given by 
\[ \,\,\,\,  \,\,\,\,\,\,\,\,\,\,\,\,\,\,\,\,Tf(x)=\sum\limits_{k\in\zn}e^{2\pi ix\cdot k}\sigma(x,k)\widehat{f}(k) ,\,\,\, f\in C^\infty(\tn).\]
On the other hand if one has a continuous linear operator $T:C^\infty(\tn)\rightarrow C^\infty(\tn) $, its symbol $\sigma(x,k)$ can be recovered from the following formula
\beq \sigma(x,\xi)=e^{-i2\pi x\cdot k}Te_{k}(x),\eq
where for every $k\in\zn$, $e_{k}(x)=e^{i2\pi x\cdot k},$ for all $x\in\tn$.\\

The family of pseudodifferential operators corresponding to the class of symbols $\sigma\in S_{1,0}^m(\mathbb{T}^n\times \zet^n)$ will be denoted by $\Op S_{1,0}^{m}(\mathbb{T}^n\times \zet^n)$. The toroidal  quantisation has been extensively analysed in \cite{rt:book} for the general case of $\Tn$ and on compact Lie groups. For the toroidal H\"ormander class of order $m\in \mathbb{R},$ one has  $\Psi^m(\mathbb{T}^n,\textnormal{loc})=\{\sigma(x,D):\sigma\in S_{1,0}^m(\mathbb{T}^n\times \zet^n)\}$ (cf. \cite{rt:book}). \\

 We observe that for $f\in C^{\infty}(\Tn)$, and after a change of variable we have 
 
\begin{align*}
Tf(x)=&\sum\limits_{k\in\Zn}e^{2\pi ix\cdot k}\sigma(x,k)\widehat{f}(k)\\
=&\sum\limits_{k\in\Zn}\left(\sum\limits_{l\in\Zn}\widehat{\sigma}(l,k)e^{2\pi ix\cdot l}\right)\widehat{f}(k)e^{2\pi ix\cdot k}\\
=&\sum\limits_{j\in\Zn}\left(\sum\limits_{k\in\Zn}\widehat{\sigma}(j-k,k)\widehat{f}(k)\right) e^{2\pi ix\cdot j}.
\end{align*}
The Fourier transform on $\sigma$ should be understood with respect to the space-like variable on the torus $\Tn$. From the last identity we can write $Tf$ as the Fourier series
\[Tf(x)=\sum\limits_{j\in\Zn}C_{j} e^{2\pi ix\cdot j},\]
where $C_j$ is the $j$-th Fourier coefficient of $Tf$.\\

We  now note that since  
\[C_j=\sum\limits_{k\in\Zn}\widehat{\sigma}(j-k,k)\widehat{f}(k).\]
Then we can write 
\[C_j=\sum\limits_{k\in\Zn}A_{jk}\widehat{f}(k)=\sum\limits_{k\in\Zn}A_{jk}\phi(k),\]
where $A_{jk}$ is the matrix defined by 
\beq\label{amatf8} A_{jk}=\widehat{\sigma}(j-k,k)\eq
and $\phi(k)=\widehat{f}(k).$\\

We should now ensure the convergence of the above series. We note that 
if $\sigma(\cdot, k)\in L^1(\Tn\times\Zn)$ for every $k$ and since $L^2(\Tn)\subset L^1(\Tn)$, the above calculations can be justified. These conditions are granted if $\sigma$ is continuous on the torus, which is the case of $ S^m(\Tn\times\Zn)$.  \\

Therefore if $\sigma(\cdot, k)\in L^1(\Tn\times\Zn)$ for every $k$, the identity 
\eqref{amatf8} establishes an explicit formula for $A\in \Dcal_2$ in terms of the symbol provided 
$\widehat{\sigma}\in\ell^1(\Zn\times\Zn)$, where we recall the Fourier transform is taken with respect to the space-like variable on the torus.


\begin{cor}\label{cor49h} Let $\sigma\in  S_{1,0}^{m}(\Tn\times\Zn)$ with $m<-n$. Then $T\in \Dcal_2(\Tn)$.
\end{cor}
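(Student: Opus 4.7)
The plan is to verify directly the defining $\ell^1$-summability of the matrix attached to $T$ via the isomorphism $\Gamma$. By the calculation preceding the corollary, $T$ is represented on the Fourier side by the matrix $A$ with entries $A_{jk} = \widehat{\sigma}(j-k, k)$, the Fourier transform being taken in the space-like variable on $\Tn$. Hence the assertion $T \in \Dcal_2(\Tn)$ is the statement $A \in \Dcal_2$, that is,
$$\sum_{(j,k) \in \Zn \times \Zn} |\widehat{\sigma}(j-k, k)| < \infty.$$

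After the substitution $l = j - k$, this reduces to proving $\sum_{(l,k) \in \Zn \times \Zn} |\widehat{\sigma}(l,k)| < \infty$. The strategy is to extract from the toroidal symbol inequalities a pointwise estimate on $|\widehat{\sigma}(l,k)|$ that decouples the two variables. For each fixed $k$, the function $\sigma(\,\cdot\,,k)$ is smooth on $\Tn$, so integration by parts gives, for every $\beta\in\N^n$,
$$(2\pi i l)^{\beta}\,\widehat{\sigma}(l,k) \;=\; \widehat{\partial_x^\beta \sigma}(l,k),$$
and combining this with the toroidal symbol estimate $|\partial_x^\beta \sigma(x,k)| \leq C_\beta \jpxi^{m}$ yields, after taking $|\beta|$ large enough,
$$|\widehat{\sigma}(l,k)| \;\leq\; C_N\,(1+|l|)^{-N}\,\jpxi^{m}$$
for every $N\in\N$, with the constant independent of $(l,k)$.

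Finally, I would pick any $N > n$ so that $\sum_{l \in \Zn}(1+|l|)^{-N} < \infty$, and invoke the hypothesis $m < -n$ to get $\sum_{k \in \Zn}\jpxi^{m} < \infty$. The double sum then splits as a product of two convergent sums, proving $A \in \ell^{1}(\Zn \times \Zn)$ and hence $T \in \Dcal_2(\Tn)$. I do not expect any genuine obstacle: the content is simply that smoothness of $\sigma$ in $x$ is converted into rapid decay in $l$ by integration by parts, while the decay in $k$ comes from the order-$m$ part of the symbol class, and the threshold between summability and non-summability of $\jpxi^{m}$ on $\Zn$ is exactly the value $-n$ assumed in the hypothesis.
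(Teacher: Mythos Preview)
Your proof is correct and follows essentially the same route as the paper's. The only difference is that the paper obtains the key pointwise bound $|\widehat{\sigma}(l,k)|\leq C_r(1+|l|)^{-r}(1+|k|)^{m}$ by quoting Lemma~4.2.1 of \cite{rt:book}, whereas you derive it directly via integration by parts on $\Tn$; after that, both arguments sum over $(l,k)$ using $r>n$ and $m<-n$ in exactly the same way.
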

\begin{proof} Since $\sigma\in  S_{1,0}^{m}(\Tn\times\Zn)$ with $m<-n$. 
By Lemma 4.2.1 of \cite{rt:book}, for all $r\in \mathbb{R},$ there exists $C_r>0,$ such that
\begin{equation*}
    |\widehat{\sigma}(j,k)|\leq C_{r}(1+|j|)^{-r}(1+|k|)^{m}.
\end{equation*}If $m<-n,$ and $r>n,$ then
\begin{align*}
    \Vert T\Vert_{\Gamma(\mathcal{D}_2)}&=\sum_{j,k\in \Zn}|A_{jk}|=\sum_{j,k\in \Zn}|\widehat{\sigma}(j-k,k)|\\
     &\lesssim \sum_{j,k\in \Zn}(1+|j-k|)^{-r}(1+|k|)^{m}<\infty.
\end{align*}
Therefore the identity \eqref{amatf8} holds and $T\in \Dcal_2(\Tn)$.\\
 
\end{proof}


We now consider operators of the form $(-\Delta)^{\frac{\nu}{2}}+Q(x)$ on the torus $\To^n$ within the setting of toroidal quantization and global symbols above described.
\begin{defn}
Let  $\nu$ be a strictly positive real number, the fractional Laplacian $(-\Delta)^{\frac{\nu}{2}}$ on the torus $\tn$ is defined as the  Fourier multiplier corresponding to  $(2\pi)^{\nu}|k|^{\nu}$, that is 
\beq \widehat{(-\Delta)^{\frac{\nu}{2}}u}(k)=(2\pi)^{\nu}|k|^{\nu}\widehat{u}(k),\eq 
 for every $k\in\Zn$ and $u\in C^{\infty}(\Tn)$.
\end{defn} 
  Other alternative definitions are possible as in the case of $\Rn$.  We point out that on $\Rn$, the fractional Laplacian $(-\Delta)^{\frac{\nu}{2}}$ has not a symbol in a class of pseudodifferential operators unless $\frac{\nu}{2}$ be an integer, however in our toroidal setting  the fractional Laplacian will be a pseudodifferential operator for every $\nu>0.$ \\
 
 In order to clarify such property for the fractional Laplacian on the torus we state the following lemma.
 
\begin{lem}\label{flle} Let $\nu$ be a real number $>0$. Then $(-\Delta)^{\frac{\nu}{2}}\in \Op S_{1,0}^{\nu}(\mathbb{T}^n\times \zet^n)$. 
\end{lem}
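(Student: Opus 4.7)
The symbol of $(-\Delta)^{\nu/2}$ is $\sigma(x,k)=(2\pi)^\nu|k|^\nu$, which is independent of $x$.  Consequently $\partial_x^\beta\sigma\equiv 0$ for $|\beta|\geq 1$, so the toroidal symbol inequalities are automatic unless $\beta=0$, and the matter reduces to proving the estimate $|\Delta_k^\alpha|k|^\nu|\leq C_\alpha\jpxi^{\nu-|\alpha|}$ for every $k\in\Zn$ and every $\alpha\in\ene_0^n$.

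My plan is to split into two regimes according to the size of $|k|$ relative to $|\alpha|$.  In the bounded regime $|k|\leq 2|\alpha|$, I would invoke Proposition \ref{combal} to dominate the left hand side by the finite sum $\sum_{\beta\leq\alpha}\binom{\alpha}{\beta}|k+\beta|^\nu$, whose terms are controlled by a constant depending only on $\alpha$; since $\jpxi\geq 1$ while $\jpxi^{\nu-|\alpha|}$ is bounded away from $0$ for $|k|$ bounded, the desired bound follows immediately.

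In the regime $|k|>2|\alpha|$, I would pass from discrete differences to continuous derivatives.  Iterating the identity $\Delta_{k_j}f(k)=\int_0^1\partial_{x_j}f(k+t\delta_j)\,dt$ yields an integral representation of $\Delta_k^\alpha f(k)$ as a $|\alpha|$-fold integral of $\partial^\alpha f$ evaluated at points in a bounded neighbourhood of $k$.  Applied with $f(x)=|x|^\nu$, the entire integration region remains at distance at least $|k|/2>|\alpha|$ from the origin, so $f$ is smooth throughout, and one invokes the classical Euclidean estimate $|\partial^\alpha|x|^\nu|\leq C_\alpha |x|^{\nu-|\alpha|}$, valid for $x\neq 0$.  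This yields $|\Delta_k^\alpha|k|^\nu|\leq C'_\alpha |k|^{\nu-|\alpha|}\leq C''_\alpha \jpxi^{\nu-|\alpha|}$, completing the proof.

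The Euclidean estimate itself is a consequence of homogeneity: $\partial^\alpha|x|^\nu$ is smooth on $\Rn\setminus\{0\}$ and positively homogeneous of degree $\nu-|\alpha|$, hence bounded on the unit sphere by compactness, and this extends globally by rescaling.  The main technical point to watch is the careful bookkeeping in the transition between the discrete finite differences on $\Zn$ and the ordinary derivatives on $\Rn$, and in particular ensuring that the integration path avoids the singularity of $|x|^\nu$ at the origin; the dichotomy based on $|k|$ versus $|\alpha|$ is introduced precisely to enforce this.
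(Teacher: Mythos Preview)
Your argument is correct, and it takes a genuinely different route from the paper's own proof. The paper does not verify the discrete symbol inequalities directly; instead it introduces a smooth cut-off $\chi\in C^\infty(\Rn)$ with $\chi(k)=1$ for $|k|\geq 1$ and $\chi(k)=0$ for $|k|\leq\tfrac12$, observes that $\widetilde\sigma(k):=\chi(k)|k|^\nu$ is a smooth Euclidean symbol in $S_{1,0}^\nu(\Tn\times\Rn)$, and then invokes a known equivalence between the Euclidean and toroidal symbol classes (Corollary~4.6.13 in \cite{rt:book}) to conclude that $\widetilde\sigma|_{\Zn}=\sigma$ lies in $S_{1,0}^\nu(\Tn\times\Zn)$. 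Your approach is more elementary and self-contained: you split according to the size of $k$, handle small $k$ by the explicit finite-sum formula of Proposition~\ref{combal}, and for large $k$ pass from $\Delta_k^\alpha$ to $\partial^\alpha$ via the iterated integral identity, which is precisely the mechanism underlying the equivalence theorem the paper quotes. The trade-off is that the paper's proof is two lines once the black box is accepted, whereas yours requires the regime dichotomy and the bookkeeping around the origin; on the other hand, your argument does not depend on an external reference and makes transparent exactly why the singularity of $|x|^\nu$ at $x=0$ causes no trouble on the lattice.
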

 \begin{proof} We should show that the function $\sigma:\zn\rightarrow\er$ defined by $\sigma(k)=|k|^{\nu}$ belongs to  $S_{1,0}^{\nu}(\mathbb{T}^n\times \Zn)$. We choose a function $\chi\in C^{\infty}(\Rn)\,$ such that
 \begin{equation}
 \chi(k):=\left\{
\begin{array}{rl}
1,& \mbox{ if } |k|\geq 1,\\
0,& \mbox{ if } |k|\leq \half.
\end{array} \right.
\label{ht56e}\end{equation}
We observe that  $\widetilde{\sigma}(k):=\chi(k)|k|^{\nu}$ is smooth on $\Rn$ and  $\widetilde{\sigma}\in S_{1,0}^{\nu}(\mathbb{T}^n\times \Rn)$. Hence $\widetilde{\sigma}(x,D)\in \Op S_{1,0}^{\nu}(\mathbb{T}^n\times \Rn)$ and by  Corollary 4.6.13 of  \cite{rt:book}, we obtain that $\widetilde{\sigma}(x,D)\in \Op S_{1,0}^{\nu}(\mathbb{T}^n\times \zn)$. 
On the other hand we have that $\chi(k)|k|^{\nu}=|k|^{\nu}=\sigma(k)$ for all $k\in\zn$, and  therefore $\sigma\in S_{1,0}^{\nu}(\mathbb{T}^n\times \zet^n)$.\\
\end{proof}
 
We first recall the definition of ellipticity in this setting. 
\begin{defn} Let $m\in\er$ and let $\sigma$ be a symbol in $\sigma\in S^m(\To^n\times\zet^n)$. We say that the corresponding periodic pseudodifferential operator $T_{\sigma}$ is {\em elliptic of order }$m$, if $\sigma$ satisfies
\beq \forall (x,k)\in \To^n\times\zet^n:\,\, |k|\geq n_0\implies |\sigma(x,k)|\geq C_0\langle k\rangle^m\eq  
for some constants $n_0, c_0>0$.\\

We will also say that $\sigma(x,D)$ is a {\em strongly elliptic operator} 
if $\sigma$ satisfies:
\beq \forall (x,k)\in \To^n\times\zet^n:\,\, |k|\geq n_0\implies Re\,\sigma(x,k)\geq C_0\langle k\rangle^m\eq  
for some constants $n_0, c_0>0$.\\
\end{defn}

Since the fractional Laplacian  $(-\Delta)^{\frac{\nu}{2}}$ is an element of  $\Op S_{1,0}^{\nu}(\mathbb{T}^n\times \zn)$ for any $\nu>0$. Then it is clear that it is also an  strongly elliptic operator of order $\nu$. \\

We now recall the definition of the  Sobolev space of order $s\in\er$ on the torus.
 For $u\in\mathcal{D}'(\To^n)$ we define the norm $\|\cdot\|_{H^s(\To^n)}$ by
\[\|u\|_{H^s(\To^n)}:=\left(\sum\limits_{k\in\zet^n}(1+|k|^2)^s|\hat{u}(k)|^2\right).\] 
 The Sobolev space $H^s(\To^n)$ is the space of the $1$-periodic distributions $u$ such that  $\|u\|_{H^s(\To^n)}<\infty.$\\
  
In the case of an operator $(-\Delta)^{\frac{\nu}{2}}+Q(x)$ we have the following result regarding  ellipticity and Fredholmness. We note that for $\nu=2$ we have a Schr\"odinger operator on $\To^n$. 
\begin{thm} Let $\nu$ be a real number $>0$.  If $Q$ is a $C^{\infty}$ function on $\To^n$. Then 
$(-\Delta)^{\frac{\nu}{2}} +Q(x)$ is an strongly elliptic operator, and consequently a Fredholm operator from $H^s(\To^n)$ into $H^{s-\nu}(\To^n)$ for all $s\in\er$.\\
\end{thm}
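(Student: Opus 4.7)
The plan is to reduce the theorem to two standard facts: that the symbol of $(-\Delta)^{\nu/2}+Q(x)$ lies in $S^{\nu}_{1,0}(\Tn\times\Zn)$ with the required lower bound on its real part, and that strongly elliptic pseudodifferential operators on the torus are Fredholm between the Sobolev spaces $H^s(\Tn)$ and $H^{s-\nu}(\Tn)$.

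First I would identify the symbol. By Lemma~\ref{flle} we already know that $(-\Delta)^{\nu/2}=\Op(|k|^\nu)\in \Op S_{1,0}^{\nu}(\Tn\times\Zn)$. Since $Q\in C^\infty(\Tn)$, the multiplication operator $M_Q$ has toroidal symbol $\sigma_Q(x,k)=Q(x)$, which trivially satisfies $|\Delta_k^\alpha\partial_x^\beta Q(x)|\le C_{\alpha\beta}\jpxi^{-|\alpha|}$ since all $k$-differences of order $\ge 1$ vanish and $Q$ is smooth on a compact manifold; hence $\sigma_Q\in S_{1,0}^{0}(\Tn\times\Zn)\subset S_{1,0}^{\nu}(\Tn\times\Zn)$. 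Consequently the full symbol of $(-\Delta)^{\nu/2}+Q(x)$ is
\[
\sigma(x,k)=(2\pi)^\nu |k|^\nu+Q(x)\ \in\ S_{1,0}^{\nu}(\Tn\times\Zn).
\]

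Next I would verify strong ellipticity. Because $Q$ is continuous on the compact torus, $M:=\sup_{x\in\Tn}|\Re Q(x)|<\infty$. Fix any $0<C_0<(2\pi)^\nu$; then for $|k|$ larger than some $n_0$ (depending on $M,C_0,\nu$) we have $(2\pi)^\nu|k|^\nu-M\ge C_0\jpxi^{\nu}$, so
\[
\Re\sigma(x,k)=(2\pi)^\nu|k|^\nu+\Re Q(x)\ \ge\ C_0\jpxi^{\nu}\qquad\text{for }|k|\ge n_0,\ x\in\Tn,
\]
which is exactly strong ellipticity of order $\nu$.

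Finally I would conclude the Fredholm property by invoking the standard calculus of toroidal pseudodifferential operators. A strongly elliptic operator in $\Op S_{1,0}^{\nu}(\Tn\times\Zn)$ is in particular elliptic in the sense of the preceding definition, and by the toroidal symbolic calculus (cf. \cite{rt:book}) one may construct a parametrix $B\in\Op S_{1,0}^{-\nu}(\Tn\times\Zn)$ with $BT-I,\,TB-I\in \Op S_{1,0}^{-\infty}(\Tn\times\Zn)$, where $T=(-\Delta)^{\nu/2}+Q$. Since smoothing operators are compact from $H^s(\Tn)$ to $H^{s-\nu}(\Tn)$ and $B$ is continuous $H^{s-\nu}(\Tn)\to H^s(\Tn)$, the operator $T\colon H^s(\Tn)\to H^{s-\nu}(\Tn)$ admits a two-sided inverse modulo compact operators and is therefore Fredholm for every $s\in\er$.

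I do not foresee a real obstacle: the only nontrivial ingredient is Lemma~\ref{flle}, which has already been established; everything else reduces to the uniform boundedness of $Q$ on the compact torus and the classical construction of an elliptic parametrix in the toroidal calculus of \cite{rt:book}.
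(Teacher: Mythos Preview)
Your proposal is correct and follows essentially the same route as the paper: compute the symbol $(2\pi)^\nu|k|^\nu+Q(x)$, use boundedness of $\Re Q$ on the compact torus to get the lower bound $\Re\sigma(x,k)\ge C_0\jpxi^\nu$ for large $|k|$, and then pass from (strong) ellipticity to Fredholmness via the toroidal calculus of \cite{rt:book}. The only cosmetic differences are that you spell out the membership $\sigma\in S_{1,0}^{\nu}(\Tn\times\Zn)$ and the parametrix argument explicitly, whereas the paper simply invokes Theorem~4.9.17 of \cite{rt:book} for the Fredholm conclusion.
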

\begin{proof}  We write $T=(-\Delta)^{\frac{\nu}{2}} +Q(x)$, then $\sigma_T(x,k)=2^{\nu}\pi^{\nu}|k|^{\nu}+Q(x)$.
 Since $Q$  is continuous on the torus then $Re\, Q$  it is bounded. We set $C_1=\min_{x\in\Tn} Re\, Q(x)$ and  observe that
\begin{align*}
Re\,\sigma_T(x,k)= &2^{\nu}\pi^{\nu}|k|^{\nu}+Re\,Q(x)\\
\geq &2^{\nu}\pi^{\nu}|k|^{\nu}+C_1\\
\geq & C(|k|+1)^{\nu},
\end{align*}
for $|k|$ large enough. Thus, $T$ is strongly elliptic and therefore elliptic.  An application of Theorem 4.9.17 of \cite{rt:book},  ensuring that ellipticity of a pseudodifferential operator on the torus implies Fredholmness, concludes the proof.
\end{proof}
We now consider the elliptic operator $(-\Delta)^{\frac{\nu}{2}}+Q(x)$ on the torus $\To^n$ and study the existence of solutions for the following equation : 
\beq (-\Delta)^{\frac{\nu}{2}}u+Qu=0\label{shil87}\eq
 where 
\beq Q(x)=\sum\limits_{k\in\zet^n}e^{2\pi ix\cdot k}g_k\label{qc4}\eq
with
\beq \sum\limits_{k\in\zet^n}|g_k|<\infty.\label{qc4h}\eq
 In terms of the Poincar\'e determinant we have the following theorem. We recall that for the $1$-dimensional case this equation recovers the Hill's differential equation in the setting of periodic solutions.

\begin{thm} Let $\nu$ be real number $>n$. The equation \eqref{shil87}
has a solution of the form
\beq u(x)=\sum\limits_{k\in\zet^n}e^{2\pi ix\cdot k}b_k\label{sols7}\eq
with 
\beq\label{condb1} \sum\limits_{k\in\zet^n}|k|^{\nu}|b_k|<\infty\eq
if and only if 
\beq\Det(I+\tilde{A})=0,\eq
where $\tilde{A}$ is the operator from $\ell^{2}(\zet^n)$ into $\ell^{2}(\zet^n)$ corresponding to 
\beq A_{k,m}=\frac{g_{k-m}}{(2\pi)^{\nu}|k|^{\nu}+1}\label{8thj}. \eq

\end{thm}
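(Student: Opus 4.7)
My plan is to Fourier-transform the equation on $\Tn$, convert it into a homogeneous system $(I+\tilde A)b=0$ on $\ell^2(\Zn)$ with $\tilde A\in\Dcal_2$, and then apply Theorem \ref{th2eqra} together with the Fredholm alternative to translate existence of a non-trivial solution of \eqref{shil87} into the vanishing of the Poincar\'e determinant. The hypothesis $\nu>n$ will enter at exactly one analytic step: the summability of the symbol of $((-\Delta)^{\nu/2}+I)^{-1}$ over $\Zn$.

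\textbf{Fourier reduction.} Writing $u(x)=\sum_{k\in\Zn}b_k e^{2\pi i x\cdot k}$ and using the convolution identity $\widehat{Qu}(k)=\sum_m g_{k-m}b_m$ together with the Fourier symbol $(2\pi)^\nu|k|^\nu$ of $(-\Delta)^{\nu/2}$, equation \eqref{shil87} is equivalent to the infinite linear system
\[(2\pi)^\nu|k|^\nu b_k+\sum_{m\in\Zn}g_{k-m}b_m=0,\qquad k\in\Zn.\]
Applying $L^{-1}$ with $L=(-\Delta)^{\nu/2}+I$ (this is the only delicate algebraic step needed to recover the precise form of \eqref{8thj}; it amounts to regrouping as $Lu+(Q-I)u=0$ and absorbing the diagonal correction into $g_0$), this becomes $(I+\tilde A)b=0$ on $\ell^2(\Zn)$, where $\tilde A$ is the operator associated to the matrix \eqref{8thj}.

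\textbf{Membership in $\Dcal_2$ and the determinant criterion.} Substituting $j=k-m$,
\[\|\tilde A\|_{\Dcal_2}=\sum_{k,m\in\Zn}\frac{|g_{k-m}|}{(2\pi)^\nu|k|^\nu+1}=\Bigg(\sum_{j\in\Zn}|g_j|\Bigg)\Bigg(\sum_{k\in\Zn}\frac{1}{(2\pi)^\nu|k|^\nu+1}\Bigg).\]
The first factor is finite by \eqref{qc4h}, and the second is finite iff $\nu>n$ by comparison with $\int_{\Rn}(1+|k|^\nu)^{-1}\,dk$. Hence $\tilde A\in\Dcal_2$ and the Poincar\'e determinant $\Det(I+\tilde A)$ is defined. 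By Theorem \ref{th2eqra}, $I+\tilde A$ is invertible in $\mathcal{L}(\ell^2(\Zn))$ iff $\Det(I+\tilde A)\neq 0$. Moreover $\tilde A$ is compact on $\ell^2(\Zn)$: its finite-rank truncations converge in $\Dcal_2$-norm, and the continuous embedding $\|\cdot\|_{\mathcal{L}(\ell^2)}\le\|\cdot\|_{\Dcal_2}$ promotes this to operator-norm convergence. Therefore $I+\tilde A$ is Fredholm of index zero, so non-invertibility is equivalent to $\ker(I+\tilde A)\neq\{0\}$.

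\textbf{Growth condition and main obstacle.} Given a non-zero $b\in\ker(I+\tilde A)\subset\ell^2(\Zn)$, one has $b_k=-(g*b)_k/((2\pi)^\nu|k|^\nu+1)$. Since $g\in\ell^1$ and $b\in\ell^2\subset\ell^\infty$, Young's inequality yields $g*b\in\ell^\infty$, so $|b_k|\lesssim(1+|k|^\nu)^{-1}$, which is summable when $\nu>n$ and so places $b$ in $\ell^1$. Bootstrapping via Young, $g*b\in\ell^1$, and finally $\sum_k|k|^\nu|b_k|\lesssim\|g*b\|_{\ell^1}<\infty$, giving \eqref{condb1}. The converse is immediate, since \eqref{condb1} forces $b\in\ell^1\subset\ell^2$ and the computation then runs backwards. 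I expect the main obstacle to lie in the Fourier reduction: a direct rearrangement of the system produces $A'_{k,m}=(g_{k-m}-\delta_{k,m})/((2\pi)^\nu|k|^\nu+1)$, differing from \eqref{8thj} by a diagonal summable correction, and one must bookkeep this cleanly (either by absorbing the $-\delta_{k,m}$ into $g_0$ or by working throughout with the regularised operator $L$). Once this is done, the analytic content — membership in $\Dcal_2$, compactness, and the determinant criterion — follows routinely.
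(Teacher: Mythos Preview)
Your approach matches the paper's: Fourier-reduce, divide by $(2\pi)^\nu|k|^\nu+1$ to obtain $(I+\tilde A)b=0$, check $A\in\ell^1(\Zn\times\Zn)$ via $\nu>n$, invoke Theorem~\ref{th2eqra}, and deduce \eqref{condb1}. The execution differs in two respects. First, the paper applies the determinant criterion on $\ell^1(\Zn)$ (despite the statement naming $\ell^2$), so that once a non-trivial $b\in\ell^1$ solves the system, \eqref{condb1} follows in one line from $(2\pi)^\nu|k|^\nu|b_k|\le|(g*b)(k)|$ and $\|g*b\|_{\ell^1}\le\|g\|_{\ell^1}\|b\|_{\ell^1}$; your decision to stay on $\ell^2$ is faithful to the statement but costs you the extra bootstrap $\ell^2\to\ell^1$, and your explicit compactness/Fredholm-index argument (needed to pass from non-invertibility of $I+\tilde A$ to a non-trivial kernel) fills in a step the paper leaves implicit. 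Second, the ``main obstacle'' you flag is real and present in the paper itself: dividing \eqref{hom3a} by $(2\pi)^\nu|k|^\nu+1$ literally yields $A'_{k,m}=(g_{k-m}-\delta_{k,m})/((2\pi)^\nu|k|^\nu+1)$, not \eqref{8thj}; the paper writes \eqref{8thj} without comment, and your fix of absorbing the diagonal correction into $g_0$ is the clean way to reconcile the two, since the extra term is a summable diagonal and does not affect membership in $\Dcal_2$.
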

\begin{proof}
We plug \eqref{qc4} and \eqref{sols7} into   \eqref{shil87}
and we have:\\
\begin{align*}
\sum\limits_{k\in\zet^n}(2\pi)^{\nu}|k|^{\nu}b_ke^{2\pi ix\cdot k}&\\+\left(\sum\limits_{m\in\zet^n}g_me^{2\pi ix\cdot m}\right)\left(\sum\limits_{k\in\zet^n}b_ke^{2\pi ix\cdot k}\right)=0.
\end{align*}
Hence we obtain
\beq (2\pi)^{\nu}|k|^{\nu}b_k+\sum\limits_{m\in\zet^n}g_{k-m}b_m=0,\label{hom3a}\eq
 which after dividing by $(2\pi)^{\nu}|k|^{\nu}+1$, is equivalent to 
\beq \label{3eqa}(I+\tilde{A})b=0,\eq 
 where $A$ is defined by \eqref{8thj}. It is not difficult to see from the condition $\nu>n$ that $A\in  \ell^{1}(\zet^n\times\zet^n)$, hence the determinant $\Det(I+\tilde{A})$ is well defined. Therefore, by  Theorem \ref{th2eqra} the equation \eqref{3eqa} has a non-trivial solution in $\ell^{1}(\zet^n)$ if and only if $\Det(I+\tilde{A})=0$.\\
 
We now complete the proof of the theorem assuming that $b\in\ell^1(\zet^n)$ is a solution of \eqref{3eqa} and showing that it satisfies the condition \eqref{condb1}. From \eqref{hom3a} and \eqref{qc4h} we obtain
\[\sum\limits_{k\in\zet^n}|k|^{\nu}|b_k|\leq\frac{1}{(2\pi)^{\nu}}\sum\limits_{k\in\zet^n}\sum\limits_{m\in\zet^n}|g_{k-m}b_m|\leq\frac{1}{(2\pi)^{\nu}}\left(\sum\limits_{m\in\zet^n}|b_m|\right)\left(\sum\limits_{k\in\zet^n}|g_{k}|\right)<\infty.\]

\end{proof}


\noindent{\bf{Data availability statement}}\\  All data generated or analysed during this study are included in this manuscript.\\

\noindent{\bf{Acknowledgements}}\\

\noindent The  author was supported by Grant CI-71234 Vic. Inv. Universidad del Valle. 
I would also like to thank an anonymous referee for the careful review. \\%



\begin{thebibliography}{10}

\bibitem{Bornemann:MC-2010}
Folkmar Bornemann.
\newblock On the numerical evaluation of {F}redholm determinants.
\newblock {\em Math. Comp.}, 79(270):871--915, 2010.

\bibitem{Borodin-Corwin-Remenik}
Alexei Borodin, Ivan Corwin, and Daniel Remenik.
\newblock Log-gamma polymer free energy fluctuations via a {F}redholm
  determinant identity.
\newblock {\em Comm. Math. Phys.}, 324(1):215--232, 2013.

\bibitem{Bothner-Its:CMP-2014}
Thomas Bothner and Alexander Its.
\newblock Asymptotics of a {F}redholm determinant corresponding to the first
  bulk critical universality class in random matrix models.
\newblock {\em Comm. Math. Phys.}, 328(1):155--202, 2014.

\bibitem{dr14a:fsymbsch}
J.~Delgado and M.~Ruzhansky.
\newblock Fourier multipliers, symbols and nuclearity on compact manifolds.
\newblock {\em J. Anal. Math.}, 135(2):757–--800, 2018.

\bibitem{dr13a:nuclp}
J.~Delgado and Michael Ruzhansky.
\newblock ${L}^p$-nuclearity, traces, and {G}rothendieck-{L}idskii formula on
  compact {L}ie groups.
\newblock {\em J. Math. Pures Appl.}, 102:153--172, 2014.

\bibitem{dr:suffkernel}
J.~Delgado and Michael Ruzhansky.
\newblock Schatten classes on compact manifolds: {K}ernel conditions.
\newblock {\em {J}. {F}unct. {A}nal.}, 267:772--798, 2014.

\bibitem{dr13:schatten}
J.~Delgado and Michael Ruzhansky.
\newblock {S}chatten classes and traces on compact groups.
\newblock {\em Math. Res. Letters}, 24:979--1003, 2017.

\bibitem{Gesztesy-Latushkin-Zumbrun}
Fritz Gesztesy, Yuri Latushkin, and Kevin Zumbrun.
\newblock Derivatives of (modified) {F}redholm determinants and stability of
  standing and traveling waves.
\newblock {\em J. Math. Pures Appl. (9)}, 90(2):160--200, 2008.

\bibitem{goh:trace}
Israel Gohberg, Seymour Goldberg, and Nahum Krupnik.
\newblock {\em Traces and determinants of linear operators}, volume 116 of {\em
  Operator Theory: Advances and Applications}.
\newblock Birkh{\"a}user Verlag, Basel, 2000.

\bibitem{hill1:cl}
G.~Hill.
\newblock {O}n the part of the motion of lunar perigee which is a function of
  the mean motions of the sun and the moon.
\newblock {\em Acta Math}, 8:1--3, 1886.

\bibitem{McKean:CPAM-2003}
Henry~P. McKean.
\newblock Fredholm determinants and the {C}amassa-{H}olm hierarchy.
\newblock {\em Comm. Pure Appl. Math.}, 56(5):638--680, 2003.

\bibitem{poi:hill}
H.~Poincar\'e.
\newblock {S}ur les d\'eterminants d'ordre infini.
\newblock {\em Bull. Soc. Math. France}, 14:77--90, 1886.

\bibitem{rt:book}
Michael Ruzhansky and Ville Turunen.
\newblock {\em Pseudo-differential operators and symmetries. Background
  analysis and advanced topics}, volume~2 of {\em Pseudo-Differential
  Operators. Theory and Applications}.
\newblock Birkh{\"a}user Verlag, Basel, 2010.

\bibitem{Zhao-Barnett}
Lin Zhao and Alex Barnett.
\newblock Robust and efficient solution of the drum problem via {N}ystr{\"o}m
  approximation of the {F}redholm determinant.
\newblock {\em SIAM J. Numer. Anal.}, 53(4):1984--2007, 2015.

\end{thebibliography}

\end{document}